\def\GL{\operatorname{GL}}
\def\SL{\operatorname{SL}}
\def\O{\operatorname{O}}
\def\Vol{\operatorname{Vol}}
\def\Div{\operatorname{div}}
\def\Ric{\operatorname{Ric}}
\def\trace{\operatorname{trace}}
\def\R{\mathbb{R}}
\def\cN{\mathcal{N}}
\newtheorem{prop}{Proposition}
\newtheorem{thm}{Theorem}
\newtheorem{cor}{Corollary}
 \theoremstyle{remark}
\newtheorem{rem}{Remark}
\begin{document}

\author{S.~E.~Stepanov}
 \address{S.~E.~Stepanov: Financial University under the Government
  of the Russian Federation, Moscow, Russia}
 \email{s.e.stepanov@mail.ru}

\title{On Geometric Analysis of the Dynamics of Volumetric Expansion}

\begin{abstract}
In this paper, we discuss the global aspect of the geometric dynamics of
volumetric expansion and its application to the problem of the existence in
the space-time of compact and complete spacelike hypersurface.
\end{abstract}

\keywords{volumetric expansion, vector field, flow, space-time, spacelike
hypersurface}

\subjclass{}

\maketitle

\section{Introduction}\label{sec0}

In this paper, we discuss the global aspect of the dynamics of volumetric
expansion and its connection with the problem on the existence of spacelike
closed and complete hypersurfaces in the space-time. This problem is
interesting for several reasons. First, manifolds that admit global spacelike
hypersurfaces are more relevant to physics than manifolds in which such
hypersurfaces cannot exist (see~\cite[p.~207]{1}. Second, the problem on the
existence of closed spacelike hypersurfaces (see~\cite{2}, \cite[p.~164]{3})
is closely related to the accelerated expansion of the Universe
(see~\cite{4}).

In Sec.~\ref{sec1}, we recall basic definitions and elementary facts of the
dynamics of volumetric expansion in the case of the absence of a metric; here
we use the methods proposed in~\cite{5}. In Sec.~\ref{sec2}, we examine this
problem for complete Riemannian manifolds by the methods developed
in~\cite{6, 7}. Finally, in Secs.~\ref{sec3} and~\ref{sec4} we discuss
applications of the dynamics of volumetric expansion to the problem on the
existence of compact, complete, spacelike hypersurfaces in Lorentzian
manifolds and in the space-time.

\section{Dynamics of Volumetric Expansion without Metric}\label{sec1}

Let $M$ be a differentiable manifold of dimension~$n$ ($n\ge 2$) and $L(M)$
be the bundle of linear frames over~$M$ with the structure group~$\GL(n,\R)$.
We define an $\SL(n,\R)$-structure on the manifold~$M$ as an
$\SL(n,\R)$-subbundle of the bundle~$L(M)$ for the subgroup~$\SL(n,\R)$
of~$\GL(n,\R)$. It is well known (see~\cite[p.~13]{8}) that the
$\SL(n,\R)$-structure on~$M$ is nothing but the volume element~$\omega$,
i.e., an $n$-form $\omega$, which does not vanish everywhere on~$M$.
Moreover, it is known that a connected manifold~$M$ admits an
$\SL(n,\R)$-structure if and only if it is orientable. Hence, we assume in
the sequel that the $n$-form $\omega$ satisfies the condition
$$
 \omega \left(\frac{\partial}{\partial x^{1}},\dots,
 \frac{\partial}{\partial x^{n}}\right)>0
$$
for an arbitrary local coordinate system $x^{1},\dots,x^{n}$ matched with the
orientation of~$M$ (see~\cite[p.~259]{9}, \cite[p.~86]{10}).

The volume element $\omega$ on the manifold~$M$ allows one to introduce the
integral $\int_M f:=\int_M f\omega$ of an arbitrary compactly supported
in~$M$, differentiable function~${f\colon M\to \R}$ (see~\cite[p.~87]{10}).
In particular, if the manifold~$M$ is closed (i.e., compact without
boundary), then we can define its \textit{volume} as follows
(see~\cite[p.~87]{10}):
$$
 \Vol_{\omega}(M):=\int_M 1= \int_M \omega >0.
$$

Let $\xi$ be a differentiable vector field on~$M$. It is well known
(see~\cite[pp.~22-23]{9}, \cite[p.~140]{11}, \cite[pp.~27, 29]{12}) that in a
neighborhood~$U$ of each point of the manifold~$M$ the field~$\xi$ generates
a  \textit{local flow}, which is a local one-parameter group of infinitesimal
diffeomorphisms or, in other words, transformations $\phi_{t}(x):U\to M$.
Such transformations are given by the formula
$$
 \phi_{t}(x^{k})=\bar{x}^{k}=x^{k}+t\xi^{k}
$$
in an arbitrary local coordinate system $x^{1},\dots,x^{n}$ in a
neighborhood~$U$, where $t\in (-\varepsilon ,+\varepsilon)\subset \R$ is a
parameter and $\xi =\xi ^{k}\partial_{k}$. The converse assertions is also
valid (see~\cite[pp.~21-22]{9}, \cite[p.~140]{11}), namely, a local flow (or,
in other words, a local one-parameter group of infinitesimal transformations
of the manifold~$M$ consisting of diffeomorphisms $\phi_{t}(x):U\to M$), for
some open set $U \subset M$, an interval $(-\varepsilon,+\varepsilon) \subset
\R$, and arbitrary $t\in(-\varepsilon,+\varepsilon)$ and $x\in U$, induces a
vector field~$\xi$ on~$U$ as follows. At each point $x\in U$, we define a
vector~$\xi_{x}$ tangent to the curve $x(t)=\phi_{t}(x)$ and such that
$\xi^{k}=dx^{i}/dt$ for $k=1,\dots,n$ in a local coordinate system
$x^{1},\dots,x^{n}$ in~$U$. The curve $x(t)=\phi_{t}(x)$ is called the
\textit{trajectory} of the flow. If the transformation group is
\textit{global}, then the vector field $\xi$ generated by it is said to be
\textit{complete} (see~\cite[p.~22]{9}, \cite[p.~29]{12}). In particular, on
a compact manifold~$M$ each differentiable vector field~$\xi $ is complete
(see~\cite[p.~23]{9}).

The vector field $\xi$ is also called the \textit{velocity vector} (field) of
the flow. For an arbitrary differentiable tensor field~$T$, one can consider
its \textit{Lie derivative} along trajectories of the flow with the velocity
vector~$\xi$:
$$
 L_{\xi}T:=\frac{d}{dt}\left(\phi_{t}^{\ast}T\right)_{|t=0}
$$
(see~\cite[p.~36]{9}, \cite[p.~71]{10}). It is well known
(see~\cite[p.~211]{13}) that the Lie derivative $L_{\xi}T$ measures the
\textit{rate of the change of the tensor~$T$} under deformations determined
by the one-parameter group of differentiable transformations~$\phi_{t}$
generated by the vector field~$\xi$. In particular, the rate $L_{\xi}\omega$
of the change of the volume element $\omega$ or, in other words, the
\textit{rate of volumetric expansion} under deformations determined by the
one-parameter group of differentiable transformations~$\phi_{t}$ generated by
the field~$\xi$ can be calculated by the formula (see~\cite[p.~259]{9},
\cite[p.~212]{13})
\begin{equation}\label{eq1}
 L_{\xi}\omega :=\big(\Div_{\omega}\xi \big)\cdot \omega.
\end{equation}
Due to~\eqref{eq1}, the function $\Div_{\omega} \xi$ is called the
\textit{logarithmic rate of change of the volume} (or the \textit{rate of
volumetric expansion}) along the flow generated by the field~$\xi$
(see~\cite[p.~195]{12}). On the other hand, for a vector field $\xi$ with
compact support in~$M$, \textit{Green's theorem} is valid
(see~\cite[p.~259]{9}):
\begin{equation}\label{eq2}
 \int_M \big(\Div_{\omega} \xi \big) \cdot \omega =0.
\end{equation}
Obviously, the conditions $\Div_{\omega} \xi>0$ and $\Div_{\omega}\xi<0$
contradict~\eqref{eq2}. If $\Div_{\omega}\xi \ge 0$ or $\Div_{\omega}\xi \le
0$, then Eq.~\eqref{eq2} implies that $\Div_{\omega}\xi=0$. This means that
$L_{\xi}\omega=0$, i.e., the one-parameter group of differentiable
transformations $\phi_{t}$ leaves $\omega $ invariant and the vector
field~$\xi$ is an \textit{infinitesimal automorphism} of the
$\SL(n,\R)$-structure (see~\cite[pp.~9-10]{8}). In hydrodynamics (even in the
absence of a metric), such a vector field~$\xi$ is said to be
\textit{divergence-free} and the flow generated by it is said to be
\textit{incompressible} (see~\cite[p.~125]{14}). The following assertion is
obvious.

\begin{prop}\label{prop1}
Let $(M,\omega)$ be a connected differentiable manifold equipped with an
$\SL(n,\R)$-structure and let a flow with a compactly supported velocity
vector~$\xi$ be given on~$M$. The volume element $\omega$ cannot increase
(decrease) along trajectories of the flow. If the volume element~$\omega$ is
a nondecreasing (or nonincreasing) function along trajectories of the flow,
then this flow is incompressible and its rate of volumetric expansion is
equal to zero.
\end{prop}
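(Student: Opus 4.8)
The plan is to reduce everything to the two facts already recorded, namely formula~\eqref{eq1} and Green's theorem~\eqref{eq2}, so that no new machinery is needed. First I would fix the dictionary between the geometric statement and the scalar~$\Div_\omega\xi$. By~\eqref{eq1} the rate of change of~$\omega$ along the trajectories is $L_\xi\omega=(\Div_\omega\xi)\,\omega$, and since $\omega$ is a volume element matched with the orientation of~$M$ it is everywhere nonzero with $\omega(\partial_1,\dots,\partial_n)>0$. Hence the sign of the change of~$\omega$ is controlled entirely by the sign of the function $\Div_\omega\xi$: saying that $\omega$ strictly increases (respectively decreases) along every trajectory means $\Div_\omega\xi>0$ (respectively $\Div_\omega\xi<0$) at every point, while ``nondecreasing'' (``nonincreasing'') means $\Div_\omega\xi\ge 0$ (respectively $\Div_\omega\xi\le 0$) on~$M$.

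For the first assertion I would argue by contradiction using~\eqref{eq2}. Since $\xi$ has compact support, so does $\Div_\omega\xi$, and Green's theorem gives $\int_M(\Div_\omega\xi)\,\omega=0$. If $\Div_\omega\xi>0$ everywhere then, because $\omega>0$, the integrand is a strictly positive top-form and its integral is strictly positive, contradicting~\eqref{eq2}; the case $\Div_\omega\xi<0$ is identical with reversed sign. Thus $\omega$ can neither strictly increase nor strictly decrease along the flow.

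For the second assertion, suppose we are in the nondecreasing case, so $\Div_\omega\xi\ge 0$ on~$M$. Then $(\Div_\omega\xi)\,\omega$ is a nonnegative continuous density whose integral over~$M$ vanishes by~\eqref{eq2}. The hard (and really the only substantive) step is to pass from the vanishing of this integral to the pointwise vanishing of $\Div_\omega\xi$: this is where continuity of~$\Div_\omega\xi$, which holds because $\xi$ is differentiable, together with positivity of~$\omega$ is used, since a continuous, sign-definite integrand whose integral against the positive measure defined by~$\omega$ is zero must vanish identically. Concluding $\Div_\omega\xi\equiv 0$, and hence by~\eqref{eq1} that $L_\xi\omega=0$, shows that $\phi_t$ leaves $\omega$ invariant, i.e.\ the flow is incompressible with vanishing rate of volumetric expansion; the nonincreasing case follows by the same argument with signs reversed.
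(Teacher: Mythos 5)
Your proposal is correct and follows essentially the same route as the paper: identify the sign of the change of $\omega$ along trajectories with the sign of $\Div_{\omega}\xi$ via~\eqref{eq1}, then invoke Green's theorem~\eqref{eq2} to rule out a definite sign and to force $\Div_{\omega}\xi\equiv 0$ in the semi-definite case. The only difference is that you make explicit the step (a continuous, sign-definite integrand with vanishing integral against the positive measure $\omega$ must vanish identically) that the paper leaves implicit in the discussion preceding the proposition.
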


\begin{proof}
At each point of the manifold~$M$, the signs of the functions of the
logarithmic rate $\Div_{\omega}\xi$ and the rate $L_{\xi}\omega$ of
volumetric expansion coincide. Moreover, $\Div_{\omega}\xi=0$ if and only if
$L_{\xi}\omega=0$. This means that of the velocity vector~$\xi$ has a compact
support in the manifold~$M$ with the volume element~$\omega$, then the
assertion of the theorem is a direct consequence of Green's theorem.
\end{proof}

Since the Lie derivative $L_{\xi }\omega$ of the volume element $\omega$
measures the rate of its change under the action of the group of
differentiable transformations~$\phi_{t}$ generated by the field~$\xi$, the
Lie derivative~$L_{\xi}(L_{\xi}\omega)$, in its turn, measures the
\textit{acceleration} of volumetric expansion, i.e., the
\textit{acceleration} of the change of the volume element~$\omega$ along
trajectories of the flow with the velocity vector~$\xi$. In this case, the
following relation holds:
\begin{multline}\label{eq3}
 L_{\xi}(L_{\xi}\omega)
 =L_{\xi}\big((\Div_{\omega}\xi)\cdot\omega\big)
 =\big(L_{\xi}(\Div_{\omega}\xi)\big)\cdot \omega
 +(\Div_{\omega}\xi)\cdot L_{\xi} \omega
 \\
 =\Big(L_{\xi}(\Div_{\omega}\xi)+(\Div_{\omega}\xi)^{2}\Big)\cdot \omega,
\end{multline}
where the function $L_{\xi}(\Div_{\omega}\xi)$ characterizes the rate of the
change of the logarithmic rate of volumetric expansion $\Div_{\omega}\xi$
along trajectories of the flow generated by the field~$\xi$. Obviously, the
vanishing of the acceleration of volumetric expansion
$L_{\xi}(L_{\xi}\omega)$ leads to the condition
$L_{\xi}(\Div_{\omega}\xi)=-(\Div_{\omega}\xi)^{2}\le 0$. As a result, the
logarithmic rate of volumetric expansion either decreases or vanishes along
trajectories of the flow. On the other hand, if the logarithmic rate of
volumetric expansion is a nondecreasing (or even zero) function along
trajectories of the flow, then the rate of volumetric expansion of this flow
is also a nondecreasing function. Obviously, the growth condition
$L_{\xi}(\Div_{\omega}\xi)>0$ for the logarithmic rate implies the increasing
of the rate of volumetric expansion since in this case
$L_{\xi}(L_{\xi}\omega)>0$.

The vector field $(\Div_{\omega}\xi)\xi$ is called the \textit{vector of
logarithmic rate of volumetric expansion}. Based on Eq.~\eqref{eq3}, we can
prove the following assertion.

\begin{prop}\label{prop2}
Let $(M,\omega)$ be a connected differentiable manifold equipped with an
$\SL(n,\R)$-structure and let a flow with velocity vector~$\xi$ be given
on~$M$ such that the vector of logarithmic rate of volumetric expansion
$(\Div_{\xi}\xi)\xi $ has a compact support in~$M$. The rate of volumetric
expansion of this flow cannot increase (decrease) along trajectories. If the
rate $L_{\xi}\omega$ of volumetric expansion along trajectories of the flow
is a nondecreasing (or nonincreasing) function, then this flow has a constant
rate of volumetric expansion. In particular, if the logarithmic rate of
volumetric expansion $\Div_{\omega}\xi$ is a nondecreasing function along
trajectories of the flow, then the flow is incompressible and its rate of
volumetric expansion~$L_{\xi}\omega$ is equal to zero.
\end{prop}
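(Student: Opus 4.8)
The plan is to mimic the structure of the proof of Proposition~\ref{prop1}, but to apply Green's theorem to the acceleration of volumetric expansion $L_{\xi}(L_{\xi}\omega)$ rather than to the rate $L_{\xi}\omega$ itself. The key observation is that, by Eq.~\eqref{eq3}, the acceleration can be rewritten as the Lie derivative of the volume element along the \emph{vector of logarithmic rate of volumetric expansion} $(\Div_{\omega}\xi)\xi$. Indeed, using Eq.~\eqref{eq1} applied to this new field, one has $L_{(\Div_{\omega}\xi)\xi}\,\omega=\big(\Div_{\omega}\big((\Div_{\omega}\xi)\xi\big)\big)\cdot\omega$, and expanding the divergence of a function times a vector field gives $\Div_{\omega}\big((\Div_{\omega}\xi)\xi\big)=L_{\xi}(\Div_{\omega}\xi)+(\Div_{\omega}\xi)^{2}$, which is exactly the bracketed factor in~\eqref{eq3}. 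Thus $L_{\xi}(L_{\xi}\omega)=L_{(\Div_{\omega}\xi)\xi}\,\omega$.

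First I would establish this identification explicitly, so that the hypothesis that $(\Div_{\omega}\xi)\xi$ has compact support is exactly the hypothesis needed to apply Green's theorem~\eqref{eq2} to the field $(\Div_{\omega}\xi)\xi$. This yields
\begin{equation*}
 \int_M \Big(L_{\xi}(\Div_{\omega}\xi)+(\Div_{\omega}\xi)^{2}\Big)\cdot\omega
 =\int_M \big(\Div_{\omega}\big((\Div_{\omega}\xi)\xi\big)\big)\cdot\omega=0.
\end{equation*}
The integrand is precisely the scalar factor in the acceleration $L_{\xi}(L_{\xi}\omega)$. Just as in Proposition~\ref{prop1}, a sign-definite integrand forces vanishing: if $L_{\xi}(L_{\xi}\omega)\ge 0$ everywhere (or $\le 0$ everywhere), then the vanishing of the integral forces $L_{\xi}(L_{\xi}\omega)=0$ pointwise, i.e.\ the acceleration of volumetric expansion is identically zero.

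Next I would translate this vanishing of the acceleration into the stated conclusions. Since $L_{\xi}(L_{\xi}\omega)=L_{\xi}(L_{\xi}\omega)$ measures the rate of change of the rate of volumetric expansion $L_{\xi}\omega$ along the trajectories, its vanishing means that $L_{\xi}\omega$ is constant along trajectories; this is the assertion that the rate of volumetric expansion cannot increase or decrease. For the second assertion, if $L_{\xi}\omega$ is assumed nondecreasing (or nonincreasing) along trajectories, then $L_{\xi}(L_{\xi}\omega)$ is a sign-definite function, and the integral identity above again forces it to vanish, so the rate $L_{\xi}\omega$ is constant. Finally, for the strengthened last assertion, the hypothesis that $\Div_{\omega}\xi$ is nondecreasing gives $L_{\xi}(\Div_{\omega}\xi)\ge 0$, whence $L_{\xi}(L_{\xi}\omega)=\big(L_{\xi}(\Div_{\omega}\xi)+(\Div_{\omega}\xi)^{2}\big)\cdot\omega\ge 0$; applying the integral identity forces both nonnegative terms to vanish, so in particular $(\Div_{\omega}\xi)^{2}\equiv 0$, giving $\Div_{\omega}\xi=0$ and hence $L_{\xi}\omega=0$, i.e.\ incompressibility.

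The step I expect to require the most care is the algebraic identification of the bracketed factor in~\eqref{eq3} with a genuine divergence $\Div_{\omega}\big((\Div_{\omega}\xi)\xi\big)$, since it is this rewriting that legitimizes the application of Green's theorem to a single compactly supported field rather than attempting to integrate~\eqref{eq3} directly. Once that identity is in place, the remaining arguments are direct sign analyses entirely parallel to Proposition~\ref{prop1}, and the only subtlety is bookkeeping the three separate sign hypotheses and checking in the last case that the nonnegativity of the sum $L_{\xi}(\Div_{\omega}\xi)+(\Div_{\omega}\xi)^{2}$ together with a vanishing integral forces each term to vanish independently.
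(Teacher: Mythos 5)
Your proposal is correct and follows essentially the same route as the paper: both apply Green's theorem to the compactly supported field $(\Div_{\omega}\xi)\xi$ to obtain the integral identity $\int_M \bigl(L_{\xi}(\Div_{\omega}\xi)+(\Div_{\omega}\xi)^{2}\bigr)\cdot\omega=0$ and then run the same sign analyses, with the final case using nonnegativity of both terms to force $\Div_{\omega}\xi=0$. Your explicit identification of the bracketed factor in~\eqref{eq3} with $\Div_{\omega}\bigl((\Div_{\omega}\xi)\xi\bigr)$ is exactly the content of the paper's Eq.~\eqref{eq4}, just spelled out more carefully.
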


\begin{proof}
Let $M$ be a differentiable manifold with volume element~$\omega$. First, we
note that for the vector field $(\Div_{\omega}\xi)\xi$, which is compactly
supported in~$M$, Green's theorem has the form
\begin{equation}\label{eq4}
 \int_M \Div \big( (\Div_{\omega}\xi)\xi \big)\cdot \omega
 =\int_M \Big( L_{\xi}(\Div_{\omega}\xi)+(\Div_{\omega}\xi)^{2} \Big)
 \cdot \omega =0.
\end{equation}
Second, it is easy to see from~\eqref{eq3} that the inequality
$$
 L_{\xi}(L_{\xi}\omega)>0 \qquad(L_{\xi}(L_{\xi}\omega)<0),
$$
which is valid everywhere on~$M$, implies the inequality
$$
 L_{\xi}(\Div_{\omega}\xi)+(\Div_{\omega}\xi)^{2}>0
 \qquad (L_{\xi}(\Div_{\omega}\xi)+(\Div_{\omega}\xi)^{2}<0),
$$
which contradicts~\eqref{eq4}. Similarly, from~\eqref{eq3} we also conclude
that the inequality
$$
 L_{\xi}(L_{\xi}\omega)\ge0 \qquad
 (L_{\xi}(L_{\xi}\omega)\le0),
$$
which is valid everywhere on~$M$, implies the inequality
$$
 L_{\xi}(\Div_{\omega}\xi)+(\Div_{\omega}\xi)^{2}\ge0
 \qquad(L_{\xi}(\Div_{\omega}\xi)+(\Div_{\omega}\xi)^{2}\le0).
$$
Therefore, Eq.~\eqref{eq4} implies that
$$
 L_{\xi}(\Div_{\omega}\xi)+(\Div_{\omega}\xi)^{2}=0
$$
and hence
$$
 L_{\xi}(L_{\xi}\omega)=0.
$$
In particular, from~\eqref{eq4} for $L_{\xi}(\Div_{\omega}\xi)\ge0$ we
conclude that $\Div_{\omega}\xi=0$ and hence the equality $L_{\xi}\omega=0$
holds.
\end{proof}

The following assertion is valid.

\begin{prop}\label{prop3}
On a closed (i.e., compact without boundary) differentiable manifold
$(M,\omega)$ equipped with an $\SL(n,\R)$-structure, there are no flows with
velocity vector~$\xi$ and the nondecreasing along the flow logarithmic rate
of volumetric expansion $\Div_{\omega}\xi$, if the acceleration
$L_{\xi}(\Div_{\omega}\xi)>0$ at least at one point.
\end{prop}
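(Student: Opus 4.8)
The plan is to argue by contradiction, leveraging the integral identity~\eqref{eq4} already established in the proof of Proposition~\ref{prop2}. The crucial first observation is that $M$ is closed, hence compact; therefore every differentiable vector field on~$M$ — in particular the vector $(\Div_{\omega}\xi)\xi$ of logarithmic rate of volumetric expansion — automatically has compact support (indeed, all of~$M$), and $\xi$ itself is complete, as recalled in Section~\ref{sec1}. This means the compact-support hypothesis of Proposition~\ref{prop2} is met for free, so Green's theorem applies and yields
\[
 \int_M \Big( L_{\xi}(\Div_{\omega}\xi) + (\Div_{\omega}\xi)^2 \Big)\,\omega = 0,
\]
exactly as in~\eqref{eq4}.

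Next I would suppose, contrary to the claim, that a flow with velocity vector~$\xi$ exists on~$M$ such that $\Div_{\omega}\xi$ is nondecreasing along the flow. By definition this means $L_{\xi}(\Div_{\omega}\xi)\ge 0$ at every point of~$M$. Since $(\Div_{\omega}\xi)^2\ge 0$ everywhere as well, the integrand $L_{\xi}(\Div_{\omega}\xi)+(\Div_{\omega}\xi)^2$ is nonnegative on all of~$M$. The additional hypothesis furnishes a point $p\in M$ at which $L_{\xi}(\Div_{\omega}\xi)(p)>0$; at this point the integrand is therefore strictly positive.

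The final step is a standard continuity argument: the integrand is a continuous function that is nonnegative everywhere and strictly positive at~$p$, so it stays strictly positive on an open neighborhood of~$p$, which has positive volume with respect to~$\omega$. Consequently the integral above is strictly positive, contradicting its vanishing. This contradiction shows that no such flow can exist, proving the proposition. I do not anticipate a genuine obstacle here: once compactness is used to guarantee compact support (so that~\eqref{eq4} is available), the whole argument reduces to tracking the signs of the two nonnegative terms and invoking continuity. The only point deserving a word of care is making explicit that the positivity convention fixed on the volume element~$\omega$ in Section~\ref{sec1} ensures $\int_M g\,\omega>0$ for any continuous $g\ge 0$ that is positive somewhere, which is precisely what the continuity step uses.
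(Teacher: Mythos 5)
Your proposal is correct and follows exactly the route the paper takes: it invokes the integral identity~\eqref{eq4}, which holds automatically on a closed manifold, and observes that the sign hypotheses on $L_{\xi}(\Div_{\omega}\xi)$ together with $(\Div_{\omega}\xi)^{2}\ge 0$ force a strictly positive integral, a contradiction. The paper merely states this contradiction in one sentence, whereas you spell out the continuity step; the content is the same.
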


This assertion is valid since the conditions imposed on the scalar functions
$\Div_{\omega}\xi$ and $L_{\xi}(\Div_{\omega}\xi)$ contradict
Eq.~\eqref{eq4}, which is also valid on any closed (i.e., compact without
boundary) manifold~$M$.

\section{Dynamics of Volumetric Expansion on Complete
 Riemannian Manifolds}\label{sec2}

Let $\O(n,\R)$ be a subgroup of orthogonal transformations from~$\GL(n,\R)$.
We determine an $\O(n,\R)$-structure on an $n$-dimensional ($n\ge 2$)
differentiable manifold~$M$ as an $\O(n,\R)$-subbundle of the bundle~$L(M)$.
It is well known (see~\cite[p.~13]{8}) that an $\O(n,\R)$-structure on~$M$ is
nothing but the Riemannian metric~$g$ on~$M$. In this casem the pair $(M,g)$
is called a Riemannian manifold. For a Riemannian manifold $(M,g)$, we can
introduce the canonical volume element~$\omega=dv$, which is defined in an
arbitrary oriented local coordinate system $x^{1},\dots,x^{n}$ on~$M$ by the
formula $dv=\sqrt{\det g} dx^{1}\wedge \dots\wedge dx^{n}$.

Recall that a Riemannian manifold $(M,g)$ is complete if each geodesic
in~$(M,g)$ can be extended for arbitrarily large values of its canonical
parameter (see~\cite[p.~166]{9}). It is known that on each connected
differentiable manifold there exists a structure of a complete Riemannian
manifold (see~\cite[p.~175]{15}).

On a complete Riemannian manifold, the \textit{generalized Green's theorem}
holds (see~\cite{16, 17}). Namely, for a differentiable vector field~$X$ on a
complete (noncompact) oriented Riemannian manifold~$M$, the conditions $\Div
X \ge 0$ (or $\Div X \le 0$) and $|X|\in L^{1}(M,g)$ imply $\Div X=0$; here
$|X|=\sqrt{g(X,X)}$ and $\Div X=\Div_{\Omega}X$ for $\Omega=dv$. Taking into
account this theorem, we can reformulate Proposition~\ref{prop1} as follows.

\begin{cor}\label{cor1}
Let on a complete noncompact oriented Riemannian manifold $(M,g)$, a flow
with complete field of velocity vectors $\xi$ be given such that $|\xi|\in
L^{1}(M,g)$. If the volume element of the manifold is a nondecreasing (or
nonincreasing) function along trajectories of the flow, then the flow is
incompressible and its rate of volumetric expansion is equal to zero.
\end{cor}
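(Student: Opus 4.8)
The plan is to reduce the statement to the generalized Green's theorem quoted just above, exactly as Proposition~\ref{prop1} was obtained from the ordinary Green's theorem~\eqref{eq2}. The only structural point to settle first is that, on the Riemannian manifold $(M,g)$ with the canonical volume element $\omega=dv$, the logarithmic rate of volumetric expansion $\Div_{\omega}\xi$ coincides with the Riemannian divergence $\Div\xi=\Div_{dv}\xi$; this is precisely the identification $\Div X=\Div_{\Omega}X$ for $\Omega=dv$ recorded before the corollary.

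Next I would translate the monotonicity hypothesis into a pointwise sign condition on $\Div\xi$. By~\eqref{eq1} we have $L_{\xi}\omega=(\Div\xi)\cdot dv$, and since the canonical volume element satisfies $dv>0$ at every point with respect to the chosen orientation, the sign of the rate of volumetric expansion $L_{\xi}\omega$ agrees everywhere with the sign of $\Div\xi$. Hence the assumption that the volume element is a nondecreasing function along the trajectories of the flow is equivalent to $L_{\xi}\omega\ge0$, which in turn is equivalent to $\Div\xi\ge0$; the nonincreasing case gives $\Div\xi\le0$ in the same way.

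With this reformulation in hand the conclusion is immediate. The hypothesis $|\xi|\in L^{1}(M,g)$ together with the one-signed divergence $\Div\xi\ge0$ (respectively $\Div\xi\le0$) places us exactly in the situation of the generalized Green's theorem on the complete noncompact oriented manifold $(M,g)$, so that theorem forces $\Div\xi=0$. By~\eqref{eq1} this yields $L_{\xi}\omega=0$, i.e.\ the flow is incompressible and its rate of volumetric expansion vanishes.

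I do not expect a genuine obstacle here: the substance of the corollary lies entirely in the generalized Green's theorem, which is assumed, and the proof is only its application. The one point deserving a word is the role of the hypotheses that are new compared with Proposition~\ref{prop1}. Completeness of the velocity field $\xi$ guarantees that the flow $\phi_{t}$ is globally defined, so that the phrase ``along trajectories of the flow'' remains meaningful on the noncompact manifold; and the integrability condition $|\xi|\in L^{1}(M,g)$ is the correct substitute for the compact-support assumption of Proposition~\ref{prop1}, being exactly what the generalized Green's theorem requires in place of compactness.
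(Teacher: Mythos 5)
Your proof is correct and follows exactly the route the paper intends: the paper gives no separate argument for Corollary~\ref{cor1} beyond stating that it is Proposition~\ref{prop1} reformulated via the generalized Green's theorem, and your reduction of the monotonicity hypothesis to the one-signed condition $\Div\xi\ge0$ (or $\le0$) followed by an application of that theorem is precisely this. Your closing remarks on the roles of completeness of $\xi$ and of the $L^{1}$ hypothesis are accurate and add nothing that conflicts with the paper.
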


If we take the vector of logarithmic rate $(\Div\xi)\xi$ as~$X$ in the
generalized Green's theorem, then Proposition~\ref{prop2} implies the
following assertion.

\begin{cor}\label{cor2}
Let on a complete, noncompact, oriented Riemannian manifold $(M,g)$, a flow
with complete field of velocity vectors~$\xi$ be given such that
$|(\Div\xi)\xi|\in L^{1}(M,g)$. If the rate of volumetric expansion is a
nondecreasing (or nonincreasing) function along trajectories of the flow,
then this flow has a constant rate of volumetric expansion. In particular, if
the logarithmic rate of volumetric expansion is a nondecreasing function
along trajectories of the flow, then the flow is incompressible and its rate
of volumetric expansion is equal to zero.
\end{cor}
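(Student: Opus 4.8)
The plan is to mirror the proof of Proposition~\ref{prop2}, replacing Green's theorem~\eqref{eq2} for compactly supported fields by the generalized Green's theorem quoted above, applied to the vector of logarithmic rate $X:=(\Div\xi)\xi$. First I would record that, exactly as in~\eqref{eq4}, the divergence of this field computes as
$$
 \Div\big((\Div\xi)\xi\big)=L_{\xi}(\Div\xi)+(\Div\xi)^{2},
$$
and that by~\eqref{eq3} this quantity is precisely the coefficient of the acceleration form, $L_{\xi}(L_{\xi}\omega)=\big(L_{\xi}(\Div\xi)+(\Div\xi)^{2}\big)\cdot\omega$. Consequently the sign of $\Div X$ coincides pointwise with the sign of the acceleration of volumetric expansion.

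Next I would translate the monotonicity hypothesis into a sign condition. Saying that the rate $L_{\xi}\omega$ of volumetric expansion is nondecreasing (nonincreasing) along the trajectories means $L_{\xi}(L_{\xi}\omega)\ge 0$ (resp.\ $\le 0$) everywhere on $M$, which by the identity above is equivalent to $\Div X\ge 0$ (resp.\ $\Div X\le 0$). Since $\xi$ is complete, the flow is globally defined and the Lie-derivative interpretation along trajectories is valid on all of~$M$; together with the standing assumption $|X|=|(\Div\xi)\xi|\in L^{1}(M,g)$, the generalized Green's theorem then forces $\Div X=0$. Hence $L_{\xi}(\Div\xi)+(\Div\xi)^{2}=0$, so $L_{\xi}(L_{\xi}\omega)=0$ and the rate of volumetric expansion is constant along the flow.

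For the final assertion I would impose in addition that the logarithmic rate $\Div\xi$ is nondecreasing along the trajectories, i.e.\ $L_{\xi}(\Div\xi)\ge 0$. Combined with the just-established relation $L_{\xi}(\Div\xi)+(\Div\xi)^{2}=0$, this yields $(\Div\xi)^{2}=-L_{\xi}(\Div\xi)\le 0$, whence $\Div\xi\equiv 0$; the flow is therefore incompressible and $L_{\xi}\omega=(\Div\xi)\cdot\omega=0$.

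The one genuinely new ingredient compared with Proposition~\ref{prop2} is the passage from the compact case to the complete noncompact case, and the only obstacle there is checking that the hypotheses of the generalized Green's theorem are met for $X=(\Div\xi)\xi$ rather than for $\xi$ itself: the integrability $|X|\in L^{1}(M,g)$ is exactly what is postulated, and the constant-sign condition on $\Div X$ is what the monotonicity hypothesis delivers, so no further analytic work is needed.
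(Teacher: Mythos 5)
Your proposal is correct and follows exactly the route the paper intends: it replays the argument of Proposition~\ref{prop2}, replacing compact-support Green's theorem by the generalized Green's theorem of Caminha et al.\ applied to $X=(\Div\xi)\xi$, whose divergence $L_{\xi}(\Div\xi)+(\Div\xi)^{2}$ carries the sign of the acceleration $L_{\xi}(L_{\xi}\omega)$. The handling of the ``in particular'' case (using $L_{\xi}(\Div\xi)\ge 0$ together with the pointwise identity $L_{\xi}(\Div\xi)+(\Div\xi)^{2}=0$ to force $\Div\xi\equiv 0$) also matches the paper's reasoning.
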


As an example, we consider a local one-parameter group of infinitesimal
transformations (i.e., a~flow) generated by the vector field~$\xi$ of a Ricci
soliton, i.e., a Riemannian manifold $(M,g)$ such that
\begin{equation*}
 -2\Ric=L_{\xi} g+2\lambda g,
\end{equation*}
where $\Ric$ is the Ricci tensor of the metric~$g$, $L_{\xi}g$ is the
derivative of the metric~$g$ in the direction of the field~$\xi$,
and~$\lambda$ is a constant (see~\cite[p.~22]{18}). For the vector of
logarithmic rate of volumetric expansion $(\Div\xi)\xi$, we have
(see~\cite{19})
\begin{equation}\label{eq5}
 \Div(\Div\xi)\xi =-L_{\xi} s+(s+n\lambda)^{2},
\end{equation}
where $s =\trace_{g} \Ric$ is the scalar curvature of the metric~$g$.

\begin{prop}\label{prop4}
Let $(M,g)$ be a complete, noncompact, oriented Ricci soliton with a complete
vector field~$\xi$ such that $|(\Div\xi)\xi|\in L^{1}(M,g)$. If the scalar
curvature~$s$ of the Ricci soliton is a nonincreasing function along
trajectories of the flow generated by the field~$\xi$, then $s=-n\lambda$ and
the flow is incompressible.
\end{prop}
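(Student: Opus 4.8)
The plan is to apply Corollary~\ref{cor2} with the auxiliary field taken to be the vector of logarithmic rate of volumetric expansion $(\Div\xi)\xi$, using the soliton-specific divergence identity~\eqref{eq5} to convert the hypothesis on the scalar curvature into a sign condition on $\Div\big((\Div\xi)\xi\big)$. The governing relation will be~\eqref{eq5}, which rewrites this divergence as $-L_{\xi}s+(s+n\lambda)^{2}$. Since the trajectories of the flow are exactly the integral curves of $\xi$, the rate of change of $s$ along the flow is measured by the Lie derivative $L_{\xi}s$; the assumption that $s$ is nonincreasing along these trajectories is therefore the statement $L_{\xi}s\le 0$.

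First I would combine the nonincreasing hypothesis $L_{\xi}s\le 0$ with the manifest nonnegativity of the square $(s+n\lambda)^{2}\ge 0$. Substituting into~\eqref{eq5} gives
\begin{equation*}
 \Div\big((\Div\xi)\xi\big)=-L_{\xi}s+(s+n\lambda)^{2}\ge 0
\end{equation*}
everywhere on $M$, since both summands are nonnegative. Thus the field $X:=(\Div\xi)\xi$ has nonnegative divergence.

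Next I would invoke the integrability hypothesis $|(\Div\xi)\xi|\in L^{1}(M,g)$ together with the completeness of $(M,g)$ and of $\xi$, which are precisely the standing assumptions of Corollary~\ref{cor2} (equivalently, of the generalized Green's theorem). With $\Div X\ge 0$ and $|X|\in L^{1}(M,g)$, the generalized Green's theorem forces $\Div X=0$ identically. Therefore
\begin{equation*}
 -L_{\xi}s+(s+n\lambda)^{2}=0
\end{equation*}
everywhere. But we have already shown that the left-hand side is a sum of two nonnegative quantities, namely $-L_{\xi}s\ge 0$ and $(s+n\lambda)^{2}\ge 0$; for their sum to vanish, each must vanish separately. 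In particular $(s+n\lambda)^{2}=0$, which yields $s=-n\lambda$.

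Finally I would extract the incompressibility conclusion. Once $s=-n\lambda$ is established, the scalar curvature is the constant $-n\lambda$, so its Lie derivative $L_{\xi}s$ vanishes trivially, consistent with $-L_{\xi}s=0$. Substituting $s=-n\lambda$ back into~\eqref{eq5} (or directly reading off from $\Div X=0$) makes $\Div\big((\Div\xi)\xi\big)=0$; combined with the constancy conclusion of Corollary~\ref{cor2}, this gives $\Div\xi=0$, whence $L_{\xi}\omega=(\Div\xi)\,\omega=0$ and the flow is incompressible. The only genuine subtlety, and the step I expect to be the main obstacle, is verifying that the sign hypotheses are compatible with applying the generalized Green's theorem to the specific field $(\Div\xi)\xi$: one must be careful that the $L^{1}$ integrability assumption is imposed on exactly this field (as it is in the statement) rather than on $\xi$ itself, and that the derivative along the flow $L_{\xi}s$ coincides with the directional derivative appearing implicitly in~\eqref{eq5}. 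Everything else is a direct combination of the nonnegativity of a square with the rigidity furnished by the generalized Green's theorem.
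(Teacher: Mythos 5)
Your overall strategy is the one the paper intends (the paper states Proposition~\ref{prop4} without a written proof, but it is clearly meant as the application of the generalized Green's theorem, via Corollary~\ref{cor2}, to the field $(\Div\xi)\xi$ using the soliton identity~\eqref{eq5}), and the first part of your argument is correct: from $L_{\xi}s\le 0$ and $(s+n\lambda)^{2}\ge 0$ you get $\Div\big((\Div\xi)\xi\big)\ge 0$, the $L^{1}$ hypothesis then forces $\Div\big((\Div\xi)\xi\big)=0$, and the vanishing of a sum of two nonnegative terms gives $s=-n\lambda$.

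The gap is in the final step, where you claim incompressibility. Knowing $s=-n\lambda$ (hence $L_{\xi}s=0$) and $\Div\big((\Div\xi)\xi\big)=0$ only tells you that $L_{\xi}(\Div\xi)+(\Div\xi)^{2}=0$, and the ``constancy conclusion'' of Corollary~\ref{cor2} says exactly the same thing; neither implies $\Div\xi=0$ (the ODE $f'+f^{2}=0$ has nonzero solutions along trajectories). The ``in particular'' clause of Corollary~\ref{cor2} that does yield $\Div\xi=0$ requires $L_{\xi}(\Div\xi)\ge 0$, which you have not verified. The missing ingredient is the trace of the soliton equation: taking $\trace_{g}$ of $-2\Ric=L_{\xi}g+2\lambda g$ and using $\trace_{g}(L_{\xi}g)=2\Div\xi$ gives $\Div\xi=-(s+n\lambda)$. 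This identity does two things at once: it shows $L_{\xi}(\Div\xi)=-L_{\xi}s\ge 0$, so the ``in particular'' clause of Corollary~\ref{cor2} applies directly (and explains why \eqref{eq5} is just the general formula $\Div\big((\Div\xi)\xi\big)=L_{\xi}(\Div\xi)+(\Div\xi)^{2}$ rewritten), and it converts your conclusion $s=-n\lambda$ immediately into $\Div\xi=0$, i.e., $L_{\xi}\omega=0$. With that one line added, your proof is complete and coincides with the argument the paper intends.
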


\section{Dynamics of Volumetric Expansion
 on Lorentzian Manifolds}\label{sec3}

On an $n$-dimensional ($n\ge3$) differentiable manifold~$M$, we introduce an
$\O(1,n-1)$-structure as an $\O(1,n-1)$-subbundle of the bundle~$L(M)$, where
$\O(1,n-1)$ is the subgroup of Lorentz transformations in~$\GL(n,\R)$. It is
well known  (see~\cite[p.~13]{8}) that an $\O(1,n-1)$-structure on~$M$ is
nothing but a pseudo-Riemannian metric~$g$ on~$M$ with Lorentz signature
$({-}{+}{+}\dots{+}$. It is also known that any noncompact differentiable
manifold admits an $\O(1,n-1)$-structure, i.e., possesses a metric with
Lorentz signature. An $\O(1,n-1)$-structure on a compact manifold exists if
and only if its Euler characteristics is equal to zero
(see~\cite[p.~50]{20}). A~pair $(M,g)$ is called a \textit{Lorentzian
manifold} (see~\cite[p.~50]{20}). The canonical volume element on~$(M,g)$ has
the form $dv=\sqrt{|\det g|} dx^{1}\wedge \dots\wedge dx^{n}$ for an
arbitrary oriented local coordinate system $x^{1},\dots,x^{n}$ on~$M$.

In on an $n$-dimensional ($n\ge3$) Lorentzian manifold $(M,g)$ a complete
unit timelike vector field $\xi$ is given, then~$(M, g)$ is said to be
\textit{time oriented by the field~$\xi$} (see~\cite[p.~50]{20}). In this
case, the following consequence of Proposition~\ref{prop2} is valid.

\begin{cor}\label{cor3}
Assume that on an $n$-dimensional ($n\ge3$) Lorentzian manifold $(M, g)$
there exists a flow with a unit timelike compactly supported velocity
vector~$\xi$. The rate of volumetric expansion of this flow cannot be a
function that increases (descreases) along its trajectories and, in
particular, the logarithmic rate of volumetric expansion cannot increase.
\end{cor}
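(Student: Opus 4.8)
The plan is to obtain Corollary~\ref{cor3} as a direct specialization of Proposition~\ref{prop2} to the Lorentzian setting; the only preparatory work is to exhibit the $\SL(n,\R)$-structure that Proposition~\ref{prop2} presupposes and to verify its compact-support hypothesis on the vector of logarithmic rate of volumetric expansion.

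First I would equip $(M,g)$ with the canonical volume element $\omega=dv=\sqrt{|\det g|}\,dx^{1}\wedge\dots\wedge dx^{n}$ introduced above. This $n$-form is nowhere vanishing and positive on positively oriented frames, so it defines an $\SL(n,\R)$-structure on $M$ in exactly the sense of Section~\ref{sec1}. With $\omega$ fixed, the divergence $\Div_{\omega}\xi$, the rate $L_{\xi}\omega$, the acceleration $L_{\xi}(L_{\xi}\omega)$, and in particular the identity~\eqref{eq3} together with Green's theorem~\eqref{eq4} all transfer without change to the Lorentzian manifold.

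Next I would check the support condition. The vector of logarithmic rate of volumetric expansion is $(\Div_{\omega}\xi)\,\xi$, a pointwise scalar multiple of $\xi$, so $\operatorname{supp}\big((\Div_{\omega}\xi)\xi\big)\subseteq\operatorname{supp}\xi$. Since $\xi$ is compactly supported by hypothesis, so is $(\Div_{\omega}\xi)\xi$, and the hypotheses of Proposition~\ref{prop2} are met. That proposition then gives at once that $L_{\xi}\omega$ can neither increase nor decrease along the trajectories, and that a nondecreasing logarithmic rate $\Div_{\omega}\xi$ is forced to vanish identically, which is the content of the corollary.

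The point deserving care, rather than a genuine obstacle, is the interplay of the two descriptors \emph{unit timelike} and \emph{compactly supported}. A unit timelike field is nowhere zero, so its support is all of $M$; demanding compact support therefore forces $M$ itself to be compact, which is compatible with the Lorentz signature only when $\chi(M)=0$. On a closed manifold, however, every field is trivially compactly supported and, as already noted after Proposition~\ref{prop3}, Green's theorem~\eqref{eq4} holds unconditionally, so the application of Proposition~\ref{prop2} is legitimate and the unit-timelike hypothesis contributes only the physical interpretation of $L_{\xi}\omega$ as the rate of volumetric expansion of the spacelike slices in time.
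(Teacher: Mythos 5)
Your proposal is correct and matches the paper's route exactly: the paper states Corollary~\ref{cor3} simply as a consequence of Proposition~\ref{prop2}, and you supply precisely the verifications needed (the canonical Lorentzian volume element furnishes the $\SL(n,\R)$-structure, and $\operatorname{supp}\big((\Div_{\omega}\xi)\xi\big)\subseteq\operatorname{supp}\xi$ gives the compact-support hypothesis). Your closing observation that a unit timelike field has full support, so the hypothesis silently forces $M$ to be compact with $\chi(M)=0$, is a correct and worthwhile clarification that the paper leaves implicit.
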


Assume that in~$(M, g)$ there exists a spacelike hypersurface~$M'$
(see~\cite[p.~92]{20}) orthogonal to trajectories of the flow generated by
the vector field~$\xi$. In this case, the \textit{acceleration vector} of the
flow $\dot{\xi}=\nabla_{\xi}\xi$ is a tangent vector for~$M'$. At points
of~$M'$, the following relations holds (see~\cite{4, 21}):
\begin{equation}\label{eq6}
 \Div\dot{\xi}=\Ric(\xi,\xi)+g(\sigma,\sigma)
 +(n-1)^{-1} (\Div\xi)^{2}+L_{\xi}(\Div\xi),
\end{equation}
where $g(\sigma,\sigma)\ge 0$ is the square of the traceless part of the
\textit{second fundamental form} of the hypersurface~$M'$
(see~\cite[p.~93]{20}). The following theorem is valid.

\begin{thm}\label{thm1}
Assume that in an $n$-dimensional ($n\ge3$) Lorentzian manifold $(M,g)$,
there exist a spacelike hypersurface~$M'$ and a flow orthogonal to it with
unit timelike velocity vector~$\xi$. If $\Ric(\xi,\xi)\ge0$ and for the
logarithmic rate of volumetric expansion $\Div\xi$ we have
$L_{\xi}(\Div\xi)\ge 0$ at each point of~$M'$ and, moreover, at least at one
point $L_{\xi}(\Div\xi)>0$, then the hypersurface~$M'$ cannot be closed.
\end{thm}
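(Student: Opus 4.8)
The plan is to argue by contradiction, integrating the pointwise identity~\eqref{eq6} over the hypersurface in direct analogy with the proof of Proposition~\ref{prop2}. Suppose that $M'$ is closed, i.e.\ compact without boundary. Since $M'$ is spacelike, the metric induced by $g$ on $M'$ is positive definite, so $(M',g|_{M'})$ is a closed Riemannian manifold; the acceleration $\dot\xi=\nabla_\xi\xi$, being tangent to $M'$, is then a differentiable vector field on this closed manifold.

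The first step is to record that, by Green's theorem in the form~\eqref{eq2} which (as already noted in the discussion following Proposition~\ref{prop3}) is valid on any closed manifold, the integral of the divergence of the tangent field $\dot\xi$ vanishes:
\begin{equation*}
 \int_{M'}(\Div\dot\xi)\,dv_{M'}=0,
\end{equation*}
where $dv_{M'}$ denotes the volume element induced on $M'$ by $g$. Integrating~\eqref{eq6} over $M'$ against $dv_{M'}$ and using this identity, I would obtain
\begin{equation*}
 0=\int_{M'}\Big(\Ric(\xi,\xi)+g(\sigma,\sigma)+(n-1)^{-1}(\Div\xi)^{2}+L_{\xi}(\Div\xi)\Big)\,dv_{M'}.
\end{equation*}

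Now each of the four integrands is pointwise nonnegative: $\Ric(\xi,\xi)\ge0$ and $L_{\xi}(\Div\xi)\ge0$ by hypothesis, $g(\sigma,\sigma)\ge0$ always holds for the square of the traceless second fundamental form, and $(n-1)^{-1}(\Div\xi)^{2}\ge0$ is automatic. Because $L_{\xi}(\Div\xi)>0$ at least at one point of $M'$, continuity makes it strictly positive on an open set, so the right-hand integral is strictly positive. This contradicts the vanishing of the left-hand side, and hence $M'$ cannot be closed.

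The step that I expect to require the most care is the vanishing of $\int_{M'}(\Div\dot\xi)\,dv_{M'}$. It is transparent precisely when $\Div\dot\xi$ in~\eqref{eq6} is understood as the intrinsic divergence of the tangent field $\dot\xi$ on $M'$ (equivalently, its divergence with respect to $dv_{M'}$), so that Green's theorem on the closed hypersurface applies verbatim. I would therefore check that~\eqref{eq6}, taken from~\cite{4,21} at points of $M'$, is stated with this intrinsic divergence; if instead the ambient divergence is meant, a term $|\dot\xi|^{2}$ appears in the relation and the integral argument must be reorganized accordingly.
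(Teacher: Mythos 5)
Your proposal is correct and follows essentially the same route as the paper: assume $M'$ is closed, apply Green's theorem to the tangent field $\dot{\xi}=\nabla_{\xi}\xi$ on the closed spacelike hypersurface to obtain the integral relation~\eqref{eq7}, and derive a contradiction from the pointwise nonnegativity of all four terms together with strict positivity of $L_{\xi}(\Div\xi)$ at one point. Your closing caveat about intrinsic versus ambient divergence is a reasonable point of care, and the paper's usage in~\eqref{eq6}--\eqref{eq7} is indeed the one that makes the hypersurface Green's theorem apply directly.
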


\begin{proof}
Assume that the hypersurface~$M'$ is a closed (i.e., compact without
boundary) spacelike submanifold in~$(M,g)$, which is orthogonal at each point
to the unit timelike vector field~$\xi$. Applying Green's theorem to the
vector field $\dot{\xi}=\nabla_{\xi}\xi$, which is tangent to~$M'$, we obtain
the integral relation
\begin{equation}\label{eq7}
 \int_M \Big( \Ric(\xi,\xi)+g(\sigma,\sigma)+(n-1)^{-1}(\Div\xi)^{2}
 +L_{\xi}(\Div\xi) \Big) \cdot dv'=0,
\end{equation}
where $dv'$ is the volume element of~$M'$. Obviously, the conditions
$\Ric(\xi,\xi)\ge0$ and $L_{\xi}(\Div\xi)\ge0$ everywhere on~$M'$ and
$L_{\xi}(\Div\xi)>0$ at least at one point of~$M'$ contradict the integral
relation~\eqref{eq7}.
\end{proof}

\begin{rem}
We recall the following classical result (see~\cite{2}): in an
$n$-dimensional ($n\ge3$) compact space-time with the Ricci tensor satisfying
the condition $\Ric(X,X)>0$ for all nonspacetime vectors $X\ne0$, there are
no closed (i.e., compact without boundary) spacelike hypersurfaces.
\end{rem}

If we assume that the spacelike hypersurface~$M'$ is a complete (noncompact)
oriented Riemannian manifold, then using the generalized Green's theorem we
can prove the following.

\begin{thm}\label{thm2}
Assume that in an $n$-dimensional ($n\ge3$) Lorentzian manifold $(M,g)$ there
exist a spacelike complete (noncompact) oriented hypersurface~$M'$ and a flow
orthogonal to it with unit timelike velocity vector~$\xi$ such that
$|\dot{\xi}|\in L^{1}(M,g)$. If $\Ric(\xi,\xi)\ge 0$ and the logarithmic rate
of volumetric expansion $\Div\xi$ satisfies the inequality
$L_{\xi}(\Div\xi)\ge0$ at each point of~$M'$, then $M'$ is a completely
geodesic submanifold of~$(M,g)$.
\end{thm}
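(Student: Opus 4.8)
The plan is to apply the generalized Green's theorem (stated in Section~\ref{sec2}) to the acceleration vector field $\dot{\xi}=\nabla_{\xi}\xi$, exactly paralleling the proof of Theorem~\ref{thm1} but with the integral identity~\eqref{eq2} replaced by its noncompact analogue. The crucial structural fact already supplied is formula~\eqref{eq6}, which expresses $\Div\dot{\xi}$ as a sum of four terms on~$M'$: the Ricci term $\Ric(\xi,\xi)$, the squared shear $g(\sigma,\sigma)\ge0$, the expansion-squared term $(n-1)^{-1}(\Div\xi)^{2}\ge0$, and the derivative $L_{\xi}(\Div\xi)$.

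First I would verify the integrability hypothesis of the generalized Green's theorem: since $\dot\xi$ is tangent to~$M'$ and the hypothesis gives $|\dot\xi|\in L^{1}(M,g)$ (equivalently $L^{1}(M',g')$ on the hypersurface), the theorem applies to $X=\dot\xi$ on the complete oriented Riemannian manifold~$M'$. Next I would observe that under the stated curvature and expansion conditions, every summand on the right-hand side of~\eqref{eq6} is nonnegative: $\Ric(\xi,\xi)\ge0$ by hypothesis, $g(\sigma,\sigma)\ge0$ always, $(n-1)^{-1}(\Div\xi)^{2}\ge0$ trivially, and $L_{\xi}(\Div\xi)\ge0$ by hypothesis at each point of~$M'$. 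Hence $\Div\dot\xi\ge0$ everywhere on~$M'$.

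The generalized Green's theorem then forces $\Div\dot\xi\equiv0$ on~$M'$. Since a sum of nonnegative functions vanishes only when each vanishes identically, I would conclude in particular that $g(\sigma,\sigma)\equiv0$. By definition $\sigma$ is the traceless part of the second fundamental form, so $\sigma\equiv0$ means the second fundamental form is pure trace, i.e. the hypersurface is totally umbilical; combined with the vanishing of the other terms one extracts that $\Div\xi$ is constant along the flow and, together with $(n-1)^{-1}(\Div\xi)^2=0$, that the full second fundamental form vanishes, so $M'$ is totally geodesic in~$(M,g)$.

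The main obstacle I anticipate is the final bookkeeping that upgrades $g(\sigma,\sigma)=0$ to the vanishing of the entire second fundamental form rather than merely its traceless part. Vanishing shear gives umbilicity, but to reach \emph{totally geodesic} one must also kill the trace part, i.e. show $\Div\xi=0$; this should follow from the term $(n-1)^{-1}(\Div\xi)^{2}=0$, which is the cleanest of the four, since it directly yields $\Div\xi\equiv0$ and hence the mean curvature vanishes. Once both the traceless part and the mean curvature vanish, the second fundamental form is identically zero and $M'$ is totally geodesic, completing the argument.
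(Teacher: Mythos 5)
Your proof is correct and follows essentially the same route as the paper's: apply the generalized Green's theorem to $X=\dot{\xi}$ on the complete oriented hypersurface~$M'$, use the decomposition~\eqref{eq6} together with the hypotheses to get $\Div\dot{\xi}\ge0$, conclude $\Div\dot{\xi}\equiv0$, and then kill each nonnegative summand pointwise, so that $\sigma=0$ and $\Div\xi=0$ force the full second fundamental form to vanish. (Note that the paper's own proof cites Eq.~\eqref{eq5}, which is evidently a misprint for Eq.~\eqref{eq6}; your write-up uses the correct identity, and your explicit final bookkeeping---traceless part plus trace both zero implies totally geodesic---is exactly the step the paper compresses into its last sentence.)
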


\begin{proof}
By the condition of the theorem, the vector field $\dot{\xi}=\nabla_{\xi}\xi$
is tangent for the complete, noncompact, oriented Riemannian manifold~$M'$ on
which Eq.~\eqref{eq5} holds. If we impose the conditions $\Ric(\xi,\xi)\ge 0$
and $L_{\xi}(\Div\xi)\ge0$ everywhere on~$M'$, then due to~\eqref{eq5} the
inequality $\Div\dot{\xi}\ge 0$ also holds. In this case, by the generalized
Green's theorem we conclude that $\Div\dot{\xi}=0$. Then, in particular,
from~\eqref{eq5} we see that $\Div\xi=\sigma=0$. This means (see~\cite{4,
21}) that the second fundamental form of the hypersurface~$M'$ vanishes and
hence the hypersurface~$M'$ itself is a \textit{completely geodesic
submanifold of~$(M,g)$} (see~\cite[pp.~93-94]{20}).
\end{proof}

\section{Dynamics of Volumetric Expansion in Space-Time}\label{sec4}

Recall that a \textit{space-time} is a connected four-dimensional oriented
Lorentzian manifold $(M,g)$ (see~\cite[c.~27]{22}). For the case $n=4$,
Eq.~\eqref{eq5} follows from the Landau--Raychaudhuri equation
(see~\cite[pp.~97--98]{22}), which describes the dynamics of flows of the
cosmological liquid in the space-time. Streamlines of this liquid are
trajectories of the flow generated by a unit timelike vector field~$\xi$
(see~\cite[p.~47]{20} and~\cite[p.~92]{23}). The hydrodynamical sense of
variables appearing in Eq.~\eqref{eq5} is as follows (see~\cite[p.~219]{1}
and~\cite[p.~96]{22}): $\sigma$ is the tensor of transversal shear, $\theta$
is the volumetric divergence, and $\dot{\xi}=\nabla_{\xi}\xi$ is the vector
field of the 4-acceleration of the cosmological liquid. In the case of the
\textit{perfect liquid} we have $\Ric(\xi,\xi)=4\pi (\mu+3\rho)$, where $\mu$
i the energy density and $\rho$ is the pressure (see~\cite[p.~98]{22}).
Moreover, in the case $n=4$, following Hawking and Penrose
(see~\cite[p.~539]{24}), the inequality $\Ric(X,X)\ge 0$, which is valid for
all unit timelike vectors~$X$, is called the \textit{energy condition} for
the space-time.

We also recall that the increasing of the logarithmic rate of volumetric
expansion implies the increasing of the rate of volumetric expansion; this is
directly related to the problem on the accelerated expansion of the Universe.
In the case of a four-dimensional space-time, the last two theorems acquire a
physical content. The following two assertions are valid (cf.~\cite{2}
and~\cite[p.~164]{3}).

\begin{cor}\label{cor4}
Assume that in a space-time satisfying the energy condition, there exist a
spacelike hypersurface and a flow of the cosmological liquid orthogonal to
it. If the logarithmic rate of volumetric expansion is a nondecreasing
function along streamlines and on the hypersurface there exists growth
points, then the hypersurface cannot be closed.
\end{cor}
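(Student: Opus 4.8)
The plan is to recognize Corollary~\ref{cor4} as the physical specialization of Theorem~\ref{thm1} to the four-dimensional space-time setting, and to carry out the proof by matching the hypotheses of the corollary to those of the theorem and invoking it directly. First I would set $n=4$ and identify the abstract objects of Theorem~\ref{thm1} with their hydrodynamical counterparts: the unit timelike velocity vector~$\xi$ is the tangent field to the streamlines of the cosmological liquid, the spacelike hypersurface of the corollary is the hypersurface $M'$ orthogonal to the flow, and the logarithmic rate of volumetric expansion $\Div\xi$ is the volumetric divergence~$\theta$ governed by equation~\eqref{eq5} (the Landau--Raychaudhuri equation).

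Next I would translate the two hypotheses. The \emph{energy condition} for the space-time, following Hawking and Penrose, is precisely the inequality $\Ric(X,X)\ge 0$ for all unit timelike~$X$; applied to the streamline field this yields $\Ric(\xi,\xi)\ge 0$, which is the first curvature hypothesis of Theorem~\ref{thm1}. The assumption that the logarithmic rate of volumetric expansion is a \emph{nondecreasing} function along the streamlines means exactly that $L_{\xi}(\Div\xi)\ge 0$ at each point of the hypersurface, and the existence of a \emph{growth point} supplies the strict inequality $L_{\xi}(\Div\xi)>0$ at least at one point. Thus both the nonstrict inequalities everywhere and the strict inequality at one point required by Theorem~\ref{thm1} are met.

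With the dictionary in place, the conclusion follows immediately: Theorem~\ref{thm1} asserts that under these hypotheses the hypersurface $M'$ cannot be closed, which is precisely the assertion of the corollary for the cosmological liquid. In effect the argument reduces to verifying that no additional structure beyond the four-dimensionality and the interpretation of $\Ric(\xi,\xi)\ge 0$ as the energy condition is needed, so that equation~\eqref{eq6} and the contradiction with the integral relation~\eqref{eq7} obtained in the proof of Theorem~\ref{thm1} apply verbatim.

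I expect the main obstacle to be not mathematical but interpretive: one must be careful that the \emph{physical} phrasing---``nondecreasing along streamlines'' and ``growth points''---is faithfully rendered as the Lie-derivative inequalities $L_{\xi}(\Div\xi)\ge 0$ and $L_{\xi}(\Div\xi)>0$, and that the energy condition is the correct sign convention making $\Ric(\xi,\xi)\ge 0$ hold for the timelike streamline field. Once this identification is justified, no further computation is required, since the compactness-plus-Green's-theorem contradiction has already been established in Theorem~\ref{thm1}; the corollary is simply that theorem read in the language of the expanding Universe.
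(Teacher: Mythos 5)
Your proposal is correct and matches the paper's own treatment: Corollary~\ref{cor4} is presented there with no separate argument, precisely as the $n=4$ specialization of Theorem~\ref{thm1} obtained by reading the energy condition as $\Ric(\xi,\xi)\ge 0$ and the monotonicity/growth hypotheses as $L_{\xi}(\Div\xi)\ge 0$ everywhere with strict inequality at a point. Your dictionary between the physical phrasing and the hypotheses of Theorem~\ref{thm1} is exactly the intended content, so nothing further is needed.
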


\begin{cor}
Assume that in a four-dimensional space-time $(M,g)$ satisfying the energy
condition, there exists a spacelike complete oriented hypersurface and a flow
of the cosmological liquid orthogonal to it with velocity vector~$\xi$ such
that $\big|\dot{\xi}\big|\in L^{1}(M,g)$. If the logarithmic rate of
volumetric expansion along streamlines is a nondecreasing (or even zero)
function, then the hypersurface is completely geodesic.
\end{cor}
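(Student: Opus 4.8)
The plan is to recognize this corollary as the four-dimensional, physically phrased specialization of Theorem~\ref{thm2} and then simply to verify that the hypotheses match. For $n=4$, equation~\eqref{eq5} coincides, as noted at the start of Section~\ref{sec4}, with the Landau--Raychaudhuri equation governing the cosmological liquid, and the relevant pointwise identity on the hypersurface is~\eqref{eq6}; thus the entire analytic machinery of Theorem~\ref{thm2} is available verbatim once the geometric hypotheses have been translated out of the hydrodynamical language.

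First I would translate the physical vocabulary into the quantities appearing in Theorem~\ref{thm2}. The flow of the cosmological liquid is by definition generated by a unit timelike vector field~$\xi$ whose streamlines are its trajectories, and by assumption this field is orthogonal to the spacelike hypersurface. Since the space-time satisfies the energy condition, $\Ric(X,X)\ge0$ holds for every unit timelike vector~$X$; applying this to $X=\xi$ yields $\Ric(\xi,\xi)\ge0$. Next, the statement that the logarithmic rate of volumetric expansion is a nondecreasing (or even zero) function along streamlines means precisely that its rate of change along the flow, namely the Lie derivative $L_{\xi}(\Div\xi)$, satisfies $L_{\xi}(\Div\xi)\ge0$ at each point of the hypersurface, with equality in the ``even zero'' case. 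The integrability hypothesis $\big|\dot{\xi}\big|\in L^{1}(M,g)$ is assumed outright, and the hypersurface is taken complete and oriented exactly as required to invoke the generalized Green's theorem.

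With $\Ric(\xi,\xi)\ge0$ and $L_{\xi}(\Div\xi)\ge0$ in hand, together with $\big|\dot{\xi}\big|\in L^{1}(M,g)$ and the completeness and orientability of the spacelike hypersurface, every hypothesis of Theorem~\ref{thm2} is met in the case $n=4$. I would therefore invoke Theorem~\ref{thm2} directly: relation~\eqref{eq6} forces $\Div\dot{\xi}\ge0$, the generalized Green's theorem then gives $\Div\dot{\xi}=0$, and reading~\eqref{eq6} back off (where $g(\sigma,\sigma)\ge0$ and $(n-1)^{-1}(\Div\xi)^{2}\ge0$) yields $\Div\xi=\sigma=0$. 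Hence the second fundamental form of the hypersurface vanishes identically, which is exactly the assertion that it is completely geodesic.

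The only point that needs genuine care—rather than a true obstacle—is the dictionary between the hydrodynamical terminology of Section~\ref{sec4} and the geometric hypotheses of Theorem~\ref{thm2}: one must confirm that ``energy condition'' delivers $\Ric(\xi,\xi)\ge0$ through $\xi$ being unit timelike, and that ``nondecreasing along streamlines'' is literally the sign condition $L_{\xi}(\Div\xi)\ge0$. Once this identification is made, the corollary is immediate and requires no computation beyond that already carried out in Theorem~\ref{thm2}.
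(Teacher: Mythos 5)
Your proposal is correct and follows exactly the route the paper intends: the corollary is stated as the $n=4$ physical reformulation of Theorem~2, and your translation of the hydrodynamical hypotheses (energy condition $\Rightarrow \Ric(\xi,\xi)\ge0$ for the unit timelike $\xi$; nondecreasing logarithmic rate along streamlines $\Rightarrow L_{\xi}(\Div\xi)\ge0$) followed by a direct invocation of that theorem is precisely the argument. You even correct a small slip in the paper by citing the identity~\eqref{eq6} rather than~\eqref{eq5} inside the Theorem~2 machinery.
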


Each closed, oriented three-dimensional manifold~$M'$ is the boundary of a
certain four-dimensional manifold (see~\cite{25, 26}). In our case, this
means that for a closed, oriented, spacelike hypersurface~$M'$, there exists
a four-dimensional submanifold $N \subset M$ such that $\partial N=M'$. Since
the space-time $(M,g)$ is oriented, the submanifold~$N$ is also oriented. In
this case, we can assume that the hypersurface~$M'$ is also oriented and its
orientation is induced by an imaginary unit normal vector~$\cN$ directed
outward at each point $x\in M'$. In this case, the divergence theorem
(see~\cite{27}) has the form
\begin{equation*}
 \int_N (\Div\xi)\,dv= -\int_{M'} g(\xi,\cN)\,dv'
 =\int_{M'} dv' =\Vol(M')>0,
\end{equation*}
where $\xi=\cN$ at each point $x\in M'$. In its turn, the divergence theorem
for the vector field $(\Div\xi)\xi$ has the form
\begin{equation}\label{eq8}
 \int_N \Big(L_{\xi}(\Div\xi)+(\Div\xi)^{2}\Big)\,dv
 =\int_{M'} (\Div\xi)\,dv'.
\end{equation}
Now it is easy to see that the conditions $L_{\xi}(\Div\xi)\ge 0$ at all
points of~$N$ and $\Div\xi=0$ at all points of~$M'$ contradict
Eq.~\eqref{eq8}. The following theorem is valid.

\begin{thm}\label{thm3}
Assume that in the space-time there exist a closed spacelike hypersurface and
an outward flow of the cosmological liquid orthogonal to it. This flow cannot
simultaneously satisfy the following two conditions:
\begin{enumerate}
 \item % 1)
along streamlines on the submanifold whose boundary coincides with this
hypersurface, the acceleration of volumetric expansion (or the logarithmic
rate of volumetric expansion) is a nondecreasing function and there exist
points at which it is nonzero;
 \item % 2)
on the hypersurface, the logarithmic rate of volumetric expansion vanishes.
\end{enumerate}
\end{thm}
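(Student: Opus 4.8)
The plan is straightforward: I would derive the divergence theorem identity \eqref{eq8} and then observe that the two hypotheses force the right-hand side and the integrand on the left to have incompatible signs. The whole argument hinges on the relation \eqref{eq3} rewritten integrally over the submanifold $N$ whose boundary is the closed hypersurface $M'$.

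First I would recall that, by the cobordism result cited in the preceding paragraph (see~\cite{25, 26}), the closed oriented spacelike hypersurface $M'$ bounds a compact oriented four-dimensional submanifold $N \subset M$ with $\partial N = M'$. Since the flow is outward and orthogonal to $M'$, we have $\xi = \cN$ along $M'$, where $\cN$ is the outward imaginary unit normal. Applying the divergence theorem (see~\cite{27}) to the vector field of logarithmic rate $(\Div\xi)\xi$ on $N$, and using the pointwise identity $\Div\big((\Div\xi)\xi\big) = L_{\xi}(\Div\xi) + (\Div\xi)^{2}$ that underlies \eqref{eq3}, I obtain exactly \eqref{eq8}:
\begin{equation*}
 \int_N \Big(L_{\xi}(\Div\xi)+(\Div\xi)^{2}\Big)\,dv
 =\int_{M'} (\Div\xi)\,dv'.
\end{equation*}

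Next I would impose the two conditions and examine signs. Condition~(2) states that $\Div\xi = 0$ at every point of $M'$, so the right-hand side of \eqref{eq8} vanishes identically. Condition~(1) states that $L_{\xi}(\Div\xi) \ge 0$ everywhere on $N$ with strict positivity at some point; since $(\Div\xi)^{2} \ge 0$ always, the integrand $L_{\xi}(\Div\xi) + (\Div\xi)^{2}$ on the left is everywhere nonnegative and strictly positive on a nonempty open set (by continuity of the integrand around the growth point). Hence the left-hand integral is strictly positive while the right-hand side is zero, a contradiction. Therefore no flow can satisfy both conditions simultaneously, which is the assertion of the theorem.

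The step I expect to be the main obstacle is the rigorous justification of the boundary identity, namely that the divergence theorem on $N$ genuinely produces the flux term $\int_{M'} g\big((\Div\xi)\xi, \cN\big)\,dv'$ with the correct sign, together with the substitution $\xi = \cN$ and $g(\cN,\cN) = -1$ for an imaginary unit timelike normal. Because $N$ carries the ambient Lorentzian metric while $M'$ is spacelike, the orientation conventions and the sign arising from the timelike normal must be tracked carefully so that $\int_{M'} g\big((\Div\xi)\xi,\cN\big)\,dv' = \int_{M'} (\Div\xi)\,dv'$ as written; once this is settled, the sign comparison above is elementary.
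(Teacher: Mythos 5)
Your proposal is correct and follows essentially the same route as the paper: the paper establishes Eq.~\eqref{eq8} in the paragraph preceding the theorem by applying the divergence theorem on the bounding four-manifold $N$ to the field $(\Div\xi)\xi$ with $\xi=\cN$ on $M'=\partial N$, and then derives the contradiction from the sign of the integrand versus the vanishing boundary term. Your treatment is in fact slightly more careful than the paper's summary, since you explicitly invoke the strict positivity at a growth point to make the left-hand integral positive.
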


\subsection*{Acknowledgment.}
The author was partially supported by the Russian Foundation for Basec
Research (project Nos.16-01-00756-a and 16-01-00053-a). The author would 
like to thank his friend Prof. A.V. Ovchinnikov for his translation into 
English language of the paper and for  his editorial suggestions, which 
led to an improvement of this paper.

\end{document}